\newcommand\myurl[1]{\url{#1}}
\theoremstyle{plain}
\newtheorem{lemma}[equation]{Lemma}
\newtheorem{proposition}[equation]{Proposition}
\newtheorem{theorem}[equation]{Theorem}
\newtheorem*{theorem*}{Theorem}
\newtheorem{corollary}[equation]{Corollary}
\theoremstyle{definition}
\numberwithin{equation}{section}
\def\norm#1.#2.{\lVert#1\rVert_{#2}}
\def\Norm#1.#2.{\bigl\lVert#1\bigr\rVert_{#2}}
\def\NOrm#1.#2.{\Bigl\lVert#1\Bigr\rVert_{#2}}
\def\NORm#1.#2.{\biggl\lVert#1\biggr\rVert_{#2}}
\def\NORM#1.#2.{\Biggl\lVert#1\Biggr\rVert_{#2}}
\def\ind{\textnormal{\textbf 1}}
\def\R{\mathbb R}
\def\M{\mathsf{M}}
\def\Cw{\mathsf{C_w ^+}}
\def\ip#1,#2,{\langle #1,#2\rangle}
\def\Ip#1,#2,{\bigl\langle#1,#2\bigr\rangle}
\def\IP#1,#2,{\Bigl\langle#1,#2\Bigr\rangle}
\def\Abs#1{\bigl\lvert#1\bigr\rvert}
\begin{document}
\raggedbottom
\title{Sharp inequalities for one-sided Muckenhoupt weights}

 \keywords{one sided maximal function, tauberian condition, Muckenhoupt weight} \subjclass[2010]{Primary: 42B25, Secondary: 42B35}
\begin{abstract} Let $A_\infty ^+$ denote the class of one-sided Muckenhoupt weights, namely all the weights $w$ for which $\M^+:L^p(w)\to L^{p,\infty}(w)$ for some $p>1$, where $\M^+$ is the forward Hardy-Littlewood maximal operator. We show that $w\in A_\infty ^+$ if and only if there exist numerical constants $\gamma\in(0,1)$ and $c>0$ such that
\[	w(\{x \in \mathbb{R} : \,  \M^+\ind_E (x)>\gamma\})\leq cw(E)\]
for all measurable sets $E\subset \R$. Furthermore, letting
\[
\Cw(\alpha)\coloneqq \sup_{0<w(E)<+\infty} \frac{1}{w(E)} w(\{x\in\R:\,\M^+\ind_E(x)>\alpha\})
\]
we show that for all $w\in A_\infty ^+$ we have the asymptotic estimate $\Cw(\alpha)-1\lesssim (1-\alpha)^\frac{1}{c[w]_{A_\infty ^+}}$ for $\alpha$ sufficiently close to $1$ and $c>0$ a numerical constant, and that this estimate is best possible. We also show that the reverse H\"older inequality for one-sided Muckenhoupt weights, previously proved by Mart\'in-Reyes and de la Torre, is sharp, thus providing a quantitative equivalent definition of $A_\infty ^+$. Our methods also allow us to show that a weight $w\in A_\infty ^+$ satisfies $w\in A_p ^+$ for all $p>e^{c[w]_{A_\infty ^+}}$.
\end{abstract}

\author{Paul Hagelstein}
\address{P.H.: Department of Mathematics, Baylor University, Waco, Texas 76798}
\email{\href{mailto:paul_hagelstein@baylor.edu}{paul\!\hspace{.018in}\_\,hagelstein@baylor.edu}}
\thanks{P. H. is partially supported by a grant from the Simons Foundation (\#208831 to Paul Hagelstein).}

\author{Ioannis Parissis}
\address{I.P.: Departamento de Matem\'aticas, Universidad del Pais Vasco, Aptdo. 644, 48080
Bilbao, Spain and Ikerbasque, Basque Foundation for Science, Bilbao, Spain}
\email{\href{mailto:ioannis.parissis@ehu.es}{ioannis.parissis@ehu.es}}
\thanks{I. P. is supported by grant  MTM2014-53850 of the Ministerio de Econom\'ia y Competitividad (Spain), grant IT-641-13 of the Basque Gouvernment, and IKERBASQUE}

\author{Olli Saari}
\address{O.S.: Department of Mathematics and Systems Analysis, Aalto University, FI-00076 Aalto, Finland
} \email{\href{mailto:olli.saari@aalto.fi}{olli.saari@aalto.fi}}
\thanks{O. S. is supported by the Academy of Finland and the V\"ais\"al\"a Foundation.}

\maketitle

\section{Introduction} \label{s.introduction} We are interested in topics related to one-sided maximal operators on Euclidean spaces. Our focus is on the one-dimensional case and the main operator under study in this paper is the forward one-sided Hardy-Littlewood maximal operator defined by
\[
\M^+f(x)\coloneqq \sup_{h>0}\frac {1}{h} \int_x ^{x+h} |f(t)|dt,\quad x\in\R,\quad f\in L^1 _{\text{loc}}(\R).
\]
By a weight $w$ we always mean a non-negative, locally integrable function on the real line. The weights $w$ for which $\M^+:L^p(w)\to L^{p,\infty}(w)$ have been identified and extensively studied. See for example \cites{Reyes, MPT, MOT,MT, ORT, Saw}. Thus it is well known that the appropriate one-sided Muckenhoupt class can be defined by
\[
[w]_{A_p ^+}\coloneqq \sup_{a<b<c}\frac{w(a,b)}{|(a,c)|} \Big( \frac{\sigma(b,c)}{|(a,c)|}\Big) ^{p-1},
\]
where $\sigma\coloneqq w^{-\frac{1}{p-1}}$, and we have that $\M^+:L^p(w)\to L^{p,\infty}(w)$, $1<p<\infty$, if and only if $w\in A_p ^+$. For $p=1$ we also have the endpoint result that $\M^+:L^1(w)\to L^{1,\infty}(w)$ if and only if
\[
[w]_{A_1 ^+}\coloneqq \NOrm \frac{\M^-w}{w}. L^\infty(\R).<+\infty,
\]
where $\M^-f(x)\coloneqq \sup_{h>0} \frac {1}{h}\int_{x-h} ^x |f(t)|dt$; see for example \cites{MOT,Saw}.

The corresponding one-sided $A_\infty ^+$ class has also been studied and one can define for example
\[
A_\infty ^+ \coloneqq \bigcup_{p>1} A_p ^+;
\]
see \cite{MPT}. The class $A_\infty ^+$ can be defined in many other equivalent ways. Here we adopt the definition through a Fujii-Wilson-type constant which amounts to demanding that
\[
[w]_{A_\infty ^+}\coloneqq \sup_{a<b}   \frac{1}{w(a,b)}\int_{(a,b)} \M^-(w\ind_{(a,b)})<+\infty.
\]
We note here that there is no standard definition for the $A_\infty ^+$ constant of one-sided Muckenhoupt weights $w$. The definition above appears in \cite{MT}*{Definition 1.7} where it is shown that $w\in A_\infty ^+$ if and only if $[w]_{A_\infty ^+}<+\infty$. A similar definition appears in \cite{KS}. We note also that for two-sided weights the constant above was introduced independently by Fujii \cite{Fujii} and Wilson \cites{W1,W2}. In the two-sided case the constant above was shown to be the appropriate quantity in order to prove sharp weighted norm inequalities for singular integral and maximal operators; see for example \cite{HytP}. Similar results for the one-sided maximal operator appear in \cite{MT}.

In this paper we pursue a characterization of $A_\infty ^+$ in terms of \emph{tauberian constants}, in the spirit of \cites{HLP,HP2}. Given a locally integrable function $w\in A_\infty ^+$ and $\alpha\in(0,1)$ we define
\[
\Cw(\alpha)\coloneqq \sup_{0<w(E)<+\infty} \frac{1}{w(E)} w(\{x\in\R:\,\M^+\ind_E(x)>\alpha\}).
\]
We call $\Cw(\alpha)$ the \emph{sharp weighted tauberian constant} of $\M^+$. It is obvious that $\Cw(\alpha)<+\infty$ for all $\alpha\in(0,1)$ whenever $\M^+:L^p(w)\to L^{p,\infty}(w)$. In this paper we show that an apparently much weaker converse to this statement holds, namely that $\Cw(\alpha_o)<+\infty$ for a single value $\alpha_o\in(0,1)$ already implies that $M^+:L^p(w)\to L^p(w)$ for sufficiently large $p>1$ and thus that $w\in A_\infty ^+$.
This is the content of our main theorem.

\begin{theorem}\label{t.main} Let $w$ be a non-negative, locally integrable function on the real line. The following are equivalent:
	\begin{enumerate}
	\item[(i)] We have that $w\in A_\infty ^+$.
	\item[(ii)] There exists $\delta>0$ such that $\Cw(\alpha)-1\lesssim (1-\alpha)^\delta$ as $\alpha\to 1^-$.
	\item[(iii)] There exists $\alpha_o\in (0,1)$ such that $\Cw(\alpha_o)<+\infty$.	
	\end{enumerate}
	
\end{theorem}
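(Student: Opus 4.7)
The plan is to close the cycle (i) $\Rightarrow$ (ii) $\Rightarrow$ (iii) $\Rightarrow$ (i). The direction (ii) $\Rightarrow$ (iii) is immediate, since the asymptotic $\Cw(\alpha) - 1 \lesssim (1-\alpha)^\delta$ forces $\Cw(\alpha_o) < +\infty$ for any $\alpha_o$ sufficiently close to $1$.

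For (i) $\Rightarrow$ (ii), I would invoke the one-sided reverse H\"older inequality of Mart\'in-Reyes and de la Torre. Given $w \in A_\infty^+$, there exist $r > 1$ and $C > 0$, both depending only on $[w]_{A_\infty^+}$, such that $w$ satisfies a right-sided reverse H\"older inequality with exponent $r$ on every interval. Decomposing $\Omega_\alpha := \{\M^+\ind_E > \alpha\}$ into its maximal open components $(a_j, b_j)$, the structure of the one-sided maximal function forces $|E \cap (a_j, b_j)|/(b_j - a_j) = \alpha$ (the upper bound coming from $\M^+\ind_E(a_j) \leq \alpha$ with the test interval of length $b_j - a_j$, and the matching lower bound from the fact that otherwise a point near $a_j$ could not admit a witness for $\M^+\ind_E > \alpha$). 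Hence $|(a_j, b_j) \setminus E| = (1-\alpha)(b_j - a_j)$. The reverse H\"older inequality, in the form $w(F) \lesssim (|F|/|I|)^{1/r'} w(I)$ for $F \subseteq I$, applied to $F = (a_j, b_j) \setminus E$ and $I = (a_j, b_j)$, yields $w((a_j, b_j) \setminus E) \lesssim (1-\alpha)^{1/r'} w(a_j, b_j)$. Summing over $j$ and using $w(\Omega_\alpha) = w(E) + \sum_j w((a_j, b_j) \setminus E)$ gives $\Cw(\alpha) - 1 \lesssim (1-\alpha)^{1/r'}$, which is (ii) with $\delta = 1/r'$.

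The main obstacle is (iii) $\Rightarrow$ (i). My plan is to extract from the single hypothesis $\Cw(\alpha_o) \leq K$ a strong-type bound $\M^+ : L^p(w) \to L^p(w)$ for $p$ sufficiently large, placing $w$ in $A_p^+ \subseteq A_\infty^+$ with the quantitative control $[w]_{A_\infty^+} \lesssim \log K / \log(1/\alpha_o)$ hinted at in the abstract. The main tool is the layer-cake representation $\int (\M^+ f)^p \, dw = p \int_0^\infty \lambda^{p-1} w(\{\M^+ f > \lambda\}) \, d\lambda$, combined with a Calder\'on--Zygmund-type decomposition of $f$ adapted to $\M^+$: for each level $\lambda$, decompose $\{\M^+ f > \lambda\}$ into its maximal components and identify, inside each component, a subset of Lebesgue density at least $\alpha_o$ supported on $\{f > \eta \lambda\}$ for a well-chosen threshold $\eta$. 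Applying the tauberian hypothesis to the characteristic function of the identified subset produces a distributional bound for $w(\{\M^+ f > \lambda\})$ in terms of $w$-level sets of $f$; integrating against $\lambda^{p-1}$ yields the desired strong-type inequality, with the constant manageable once $p$ exceeds a threshold of order $\log K / \log(1/\alpha_o)$. The key difficulty, which I expect to be the main obstacle, is the calibration of $\eta$ with the tauberian level $\alpha_o$ in a way that makes the hypothesis applicable inside each component of $\{\M^+ f > \lambda\}$, together with tracking the quantitative dependence carefully so as to obtain the exponential bound $p > e^{c[w]_{A_\infty^+}}$ stated in the abstract.
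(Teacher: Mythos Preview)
Your argument for (i) $\Rightarrow$ (ii) has a genuine gap. You invoke the estimate $w(F) \lesssim (|F|/|I|)^{1/r'} w(I)$ for arbitrary $F \subseteq I$, but this \emph{two-sided} $A_\infty$ consequence is exactly what one-sided weights need not satisfy. Concretely, the family $w_M(x) = e^{Mx}$ has $[w_M]_{A_1^+} = 1$ (hence $[w_M]_{A_\infty^+}$ uniformly bounded), yet for $I=(0,1)$ and $F=(1/2,1)$ one has $w_M(F)/w_M(I) \to 1$ as $M\to\infty$ while $|F|/|I|=1/2$. So no estimate of the form you claim can hold with constants depending only on $[w]_{A_\infty^+}$. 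The only quantitative consequence of $w\in A_\infty^+$ available here is the genuinely one-sided one (Theorem~\ref{t.ainfty}(ii)): for $a<b<c$ and $F\subseteq (a,b)$, one controls $w(F)/w(a,c)$ by a power of $|F|/|(b,c)|$, where the interval $(b,c)$ sits \emph{to the right} of the set $F$.

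The paper's fix is to manufacture that right-hand interval inside each rising-sun component $(a,b)$. One chooses an increasing sequence $a=x_0<x_1<\cdots\to b$ with $\int_{x_{k-1}}^{x_k}\ind_E=\int_{x_k}^{b}\ind_E$, so that the pieces $(x_{k-1},x_k]$ telescope with controlled overlap; on each piece one applies the one-sided estimate with $(a,b,c)=(x_{k-1},x_k,x_{k+1})$, using the rising-sun property $\fint_x^{b}\ind_E>\alpha$ to bound $|(x_{k-1},b)\setminus E|$ by a multiple of $|(x_k,x_{k+1})|$. Summing in $k$ gives $w((a,b)\setminus E)\lesssim (1-\alpha)^{(c[w]_{A_\infty^+})^{-1}} w(a,b)$. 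This decomposition, due to Mart\'in-Reyes, is precisely the missing idea in your sketch.

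For (iii) $\Rightarrow$ (i), your layer-cake/Calder\'on--Zygmund plan is plausible but vaguer than necessary. The paper takes a cleaner route: a halo-iteration lemma shows that for $0<\lambda<\alpha<1$ one has $\{\M^+\ind_E>\lambda\}\subseteq \mathcal H_\alpha^{+,N}(E)$ with $N=\lceil \log(1/\lambda)/\log(1/\alpha)\rceil$, where $\mathcal H_\alpha^{+}$ denotes the level-$\alpha$ halo. Iterating the single tauberian bound $w(\mathcal H_\alpha^{+}(S))\le \Cw(\alpha_o)\,w(S)$ then yields directly a restricted weak-type $(p,p)$ bound for $\M^+$ with $p=\log\Cw(\alpha_o)/\log(1/\alpha_o)$, which is already enough to place $w$ in $A_p^+$. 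This avoids the calibration of your threshold $\eta$ altogether and gives the quantitative exponent on the nose.
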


The study of asymptotic estimates of the type $\Cw(\alpha)-1\lesssim (1-\alpha)^\delta$ as $\alpha\to 1^-$ has a small history. In \cite{Sol}, Solyanik proved corresponding estimates for the usual Hardy-Littlewood maximal function with respect to axes parallel cubes in $\R^n$, both in its centered and non-centered version, as well as for the strong maximal function. In \cites{HP1} the first two authors continued these investigations and introduced the term \emph{Solyanik estimate} to indicate the validity of $\lim_{\alpha\to 1^-}\mathsf{C}(\alpha)=1$, whenever $\mathsf{C}(\alpha)$ denotes the sharp tauberian constant with respect to some geometric maximal operator. Finally, in \cite{HP2}, Solyanik estimates are shown to hold in the classical (two-sided) weighted setting and in fact they characterize the usual $A_\infty$ class of Muckenhoupt weights.

 Going back to our main result above and assuming that $w\in A_\infty ^+$ we can give a very precise version of the Solyanik estimate (ii).
\begin{corollary}\label{c.main1} Let $w\in A_\infty ^+$ be a one-sided Muckenhoupt weight on the real line. There exists a numerical constant $c>0$ such that
	\[
	\Cw(\alpha)-1\lesssim  (1-\alpha)^{(c[w]_{A_\infty ^+})^{-1}}\quad\text{for all}\quad 1>\alpha>1-e^{-c[w]_{A_\infty ^+}}
	\]
with the implicit constant independent of everything. Furthermore this estimate is optimal up to numerical constants; if $w$ is a locally integrable function that satisfies
 \[
 \Cw(\alpha)-1\lesssim (1-\alpha)^\frac{1}{\beta}\quad\text{whenever}\quad 1>\alpha>1-e^{-\beta}
 \]
 for some constant $\beta>1$ then $w\in A_\infty ^+$ and $[w]_{A_\infty ^+}\lesssim \beta$.
\end{corollary}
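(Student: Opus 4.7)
The plan is to obtain both halves of the corollary by tracking the quantitative dependence on $[w]_{A_\infty^+}$ through the arguments behind Theorem~\ref{t.main}, exploiting structural properties of the level sets of $\M^+$ together with a quantitative form of the one-sided reverse H\"older inequality.

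For the upper estimate on $\Cw(\alpha)-1$, I fix $E$ with $0<w(E)<+\infty$, write $E_\alpha=\{\M^+\ind_E>\alpha\}$, and decompose $E_\alpha=\bigsqcup_j (a_j,b_j)$ into its maximal open component intervals. A sunrise-type lemma for the forward maximal operator, applied to $\ind_E-\alpha$, produces the key identity $|E\cap(a_j,b_j)|=\alpha(b_j-a_j)$ on each component, so that
\[
|(a_j,b_j)\setminus E|=(1-\alpha)(b_j-a_j).
\]
I then appeal to the sharp quantitative one-sided reverse H\"older inequality (which the paper establishes with exponent $\epsilon \asymp 1/[w]_{A_\infty^+}$). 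Applying it together with H\"older's inequality on each $(a_j,b_j)$ yields
\[
w\bigl((a_j,b_j)\setminus E\bigr) \leq C\,(1-\alpha)^{\epsilon/(1+\epsilon)}\,w(a_j,c_j),
\]
where $c_j\geq b_j$ is the forward enlargement demanded by the one-sided reverse H\"older. Summing over $j$, controlling the overlap of the $(a_j,c_j)$ uniformly, and absorbing the resulting $w(E_\alpha)$ term into the left-hand side gives $\Cw(\alpha)\leq 1+2C(1-\alpha)^{\epsilon/(1+\epsilon)}$, valid as long as $C(1-\alpha)^{\epsilon/(1+\epsilon)} \leq 1/2$. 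Unwinding this condition using $\epsilon\asymp 1/[w]_{A_\infty^+}$ yields precisely the range $1-\alpha<e^{-c[w]_{A_\infty^+}}$ claimed, and since $\epsilon/(1+\epsilon)\asymp 1/[w]_{A_\infty^+}$, the power in the estimate is of the correct form.

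For the converse (sharpness), I assume $\Cw(\alpha)-1\lesssim(1-\alpha)^{1/\beta}$ whenever $1-\alpha<e^{-\beta}$ and estimate the Fujii-Wilson functional defining $[w]_{A_\infty^+}$ on an arbitrary interval $(a,b)$ via layer cake:
\[
\frac{1}{w(a,b)}\int_{(a,b)}\M^-(w\ind_{(a,b)})=\frac{1}{w(a,b)}\int_0^\infty \bigl|\{x\in(a,b)\mid \M^-(w\ind_{(a,b)})(x)>t\}\bigr|\,dt.
\]
Using a duality/reflection argument to convert $\M^-$-level sets at height $t$ into $\M^+$-level sets of an appropriate test set whose tauberian constant is then controlled, I obtain geometric decay in $t$ with rate governed by $\beta$. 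Choosing the truncation level so that the tauberian hypothesis applies at $\alpha\sim 1-e^{-\beta}$ and summing geometrically yields $[w]_{A_\infty^+}\lesssim\beta$.

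The principal obstacle in the forward direction is the precise tracking of constants in the one-sided reverse H\"older inequality: one must show the exponent is $\epsilon = c/[w]_{A_\infty^+}$ with $c$ universal, and control the overlap of the forward enlargements $(a_j,c_j)$ independently of the decomposition so that summation does not corrupt the power. The chief difficulty in the sharpness direction is the asymmetry between $\M^-$, which appears in $[w]_{A_\infty^+}$, and $\M^+$, which appears in $\Cw$: the reflection needed to bridge them must be carried out in a way compatible with the weight structure, and this is delicate for genuinely one-sided weights where time-reversal is not a symmetry.
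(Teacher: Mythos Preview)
Your forward-direction argument has a genuine gap at the summation step. The one-sided reverse H\"older inequality (or its consequence, Theorem~\ref{t.ainfty}(ii)) requires a triple $a<b<c$ and controls $w$ on $(a,b)$ only in terms of $w(a,c)$; applied with $a=a_j$, $b=b_j$, $c=c_j>b_j$ and $|(b_j,c_j)|\sim|(a_j,b_j)|$, you get $w((a_j,b_j)\setminus E)\lesssim(1-\alpha)^{\epsilon/(1+\epsilon)}w(a_j,c_j)$. But the enlarged intervals $(a_j,c_j)$ protrude to the right of $E_\alpha$, and for genuinely one-sided weights (which are not doubling---think $w(x)=e^x\in A_1^+$) the mass $w(b_j,c_j)$ can be an arbitrarily large multiple of $w(a_j,b_j)$. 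There is no mechanism to bound $\sum_j w(a_j,c_j)$ by $w(E_\alpha)$ or $w(E)$, so the absorption you describe does not go through. The paper avoids this by staying \emph{inside} each component $(a_j,b_j)$: following an idea of Mart\'in-Reyes, it picks an increasing sequence $x_0=a_j<x_1<x_2<\cdots\to b_j$ with $\int_{x_{k-1}}^{x_k}\ind_E=\int_{x_k}^{b_j}\ind_E$, applies Theorem~\ref{t.ainfty}(ii) on the triples $x_{k-1}<x_k<x_{k+1}$ (which lie entirely in $(a_j,b_j)$ and have overlap~$2$), and uses the rising-sun property $\fint_x^{b_j}\ind_E>\alpha$ to compare $|(x_{k-1},b_j)\setminus E|$ with $|(x_k,x_{k+1})|$. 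This internal decomposition is the missing idea.

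Your sharpness argument is also off target. The Fujii--Wilson functional is a Lebesgue integral of $\M^-(w\ind_{(a,b)})$, so the layer-cake produces \emph{Lebesgue} measures of $\M^-$-level sets, while the tauberian hypothesis controls \emph{$w$-measures} of $\M^+$-level sets of indicator functions; a reflection swaps $\M^\pm$ but also reflects $w$, and $w\in A_\infty^+$ does not imply $w(-\,\cdot\,)\in A_\infty^+$, so the bridge you propose does not exist in general. The paper proceeds quite differently and more directly: given $a<b<c$ and $E\subseteq(a,b)$, it sets $E'\coloneqq(a,c)\setminus E$ and observes that $\M^+\ind_{E'}(x)\geq 1-|E|/|(b,c)|$ for every $x\in(a,c)$, so the Solyanik hypothesis applied at level $\alpha=1-|E|/|(b,c)|$ gives $w(a,c)\leq \Cw(\alpha)w(E')$, which rearranges to $\frac{w(E)}{w(a,c)}\lesssim\bigl(\frac{|E|}{|(b,c)|}\bigr)^{1/\beta}$. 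By Theorem~\ref{t.ainfty}(iii) this measure condition already forces $[w]_{A_\infty^+}\lesssim\beta$. No reflection or direct attack on the Fujii--Wilson integral is needed.
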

We note in passing that the sharpness direction in Corollary~\ref{c.main1} above relies on exhibiting a sharp reverse H\"older inequality for one-sided Muckenhoupt weights. The validity of the reverse H\"older inequality is actually proved in \cite{MT}. We show in Theorem~\ref{t.ainfty} that the reverse H\"older inequality of \cite{MT} is best possible, which in turn allows to prove the optimality of the Solyanik estimate in the corollary above.

Secondly, it is of some importance to note that our method of proof together with the corollary above allows us to conclude a quantitative embedding of $A_\infty ^+$ into $A_p ^+$.
\begin{corollary}\label{c.main2} Let $w\in A_\infty ^+$ be a one-sided Muckenhoupt weight on the real line and denote by $[w]_{A_\infty ^+}$ its Fujii-Wilson constant. Then there exists a numerical constant $c>0$ such that $w\in A_p ^+$ for all $p>e^{c[w]_{A_\infty ^+}}$ and $[w]_{A_p ^+}\leq \exp(\exp(c[w]_{A_\infty ^+}))$.
\end{corollary}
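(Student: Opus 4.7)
My plan is to derive the quantitative embedding by combining the Solyanik-type estimate from Corollary~\ref{c.main1} with the sharp one-sided reverse H\"older inequality (Theorem~\ref{t.ainfty}) and the method used to prove (iii)$\Rightarrow$(i) of Theorem~\ref{t.main}. The statement is really a quantitative shadow of these results, and the argument should reduce to a careful tracking of constants.

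I would start by invoking Corollary~\ref{c.main1}, selecting $\alpha_0$ strictly inside its range of validity, for example $\alpha_0 = 1-\tfrac{1}{2}e^{-c[w]_{A_\infty^+}}$, which yields a uniform bound $\Cw(\alpha_0)\leq K$ for some numerical constant $K$. The gap $1-\alpha_0 \asymp e^{-c[w]_{A_\infty^+}}$ is the quantity that should govern the range of $p$ in the conclusion.

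Next, I would exploit the reverse H\"older inequality from Theorem~\ref{t.ainfty} with exponent $1+\epsilon$, $\epsilon^{-1}\sim e^{c[w]_{A_\infty^+}}$, to check the $A_p^+$ condition directly. Concretely, for $a<b<c$ and $\sigma = w^{-1/(p-1)}$, a H\"older splitting with exponent $1+\epsilon$ relates the average of $\sigma$ on $(b,c)$ to averages of $w^{1+\epsilon}$, which are then bounded by averages of $w$ via the RHI. Requiring the H\"older and RHI exponents to be compatible should force $p>1+1/\epsilon \sim e^{c[w]_{A_\infty^+}}$, matching the stated range. The alternative route, in case the direct H\"older manipulation is awkward in the one-sided setting, is to track constants through the Calder\'on--Zygmund-type iteration implicit in the proof of (iii)$\Rightarrow$(i) of Theorem~\ref{t.main}, starting from $\Cw(\alpha_0)\leq K$.

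The principal obstacle I anticipate is the bookkeeping of exponential dependencies rather than any conceptual difficulty. If the RHI constant itself grows like $\exp(c[w]_{A_\infty^+})$, as one expects from the sharpness of the reverse H\"older exponent, then carrying it through a $(p-1)$-st power in the $A_p^+$ definition with $p\sim e^{c[w]_{A_\infty^+}}$ will produce the second exponential layer and should yield $[w]_{A_p^+}\leq \exp(\exp(c[w]_{A_\infty^+}))$. I expect no genuinely new ideas beyond what is already used in the proofs of Theorem~\ref{t.ainfty} and Corollary~\ref{c.main1}, but the precise tracking of constants through the H\"older and RHI combination is the delicate step.
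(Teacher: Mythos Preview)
Your fallback route---tracking constants through the proof of (iii)$\Rightarrow$(i) in Theorem~\ref{t.main}---is exactly what the paper does. Corollary~\ref{c.main1} gives $\Cw(\alpha_o)\lesssim 1$ at $\alpha_o=1-e^{-c[w]_{A_\infty^+}}$; Proposition~\ref{p.restricted} (the halo iteration, Lemma~\ref{l.haloiter}) then yields restricted weak type $(p,p)$ for $p=\log\Cw(\alpha_o)/\log(1/\alpha_o)\eqsim e^{c[w]_{A_\infty^+}}$; and the double exponential on $[w]_{A_p^+}$ comes from interpolating to strong type and raising the resulting operator norm to the $p$-th power via $\|\M^+\|_{L^p(w)\to L^p(w)}\gtrsim [w]_{A_p^+}^{1/p}$. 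So the alternative you list second is the correct and complete argument.

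Your primary route, however, has a real gap. The one-sided reverse H\"older inequality of Theorem~\ref{t.ainfty} controls $\int_{(a,b)} w^{1+\epsilon}$, a \emph{higher positive} power of $w$, whereas the $A_p^+$ condition asks for control of $\sigma(b,c)=\int_{(b,c)} w^{-1/(p-1)}$, a \emph{negative} power. There is no H\"older splitting that converts the former into the latter: H\"older applied to $\sigma$ only produces larger negative powers of $w$, and H\"older applied to $1=w^{1/p}w^{-1/p}$ produces a \emph{lower} bound on $w(I)\sigma(I)^{p-1}$, not an upper bound. This is precisely why the paper routes everything through the maximal operator (restricted weak type) rather than through the weight directly. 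In the two-sided theory one can pass from $A_\infty$ to $A_p$ via John--Nirenberg for $\log w$ or via duality arguments, but in the one-sided setting these are not available in the same form, which is part of the point of the paper.

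Two smaller inaccuracies in your bookkeeping: the reverse H\"older exponent furnished by Theorem~\ref{t.ainfty} is $\epsilon\eqsim 1/[w]_{A_\infty^+}$, not $\epsilon^{-1}\eqsim e^{c[w]_{A_\infty^+}}$; and the RHI \emph{constant} is the fixed number $2$, not something growing with $[w]_{A_\infty^+}$. The exponential dependence in the conclusion does not come from the RHI at all---it enters through $\log(1/\alpha_o)\eqsim e^{-c[w]_{A_\infty^+}}$ in the computation of $p$ from Proposition~\ref{p.restricted}.
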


We close this introductory section with a few comments concerning our motivation for studying these estimates. In \cites{KS,Saari} the third author has considered higher dimensional weighted norm inequalities for one-sided maximal operators. These arise naturally in the study of solutions of appropriate doubly nonlinear partial differential equations. Other approaches to defining and studying higher dimensional one-sided maximal operators appear for example in  \cite{FMO},\cite{LerOmb}, and \cite{Ombrosi}. A common feature of all these studies is that, in the higher dimensional case, the connection between the one-sided classes $A_p ^+$ and $A_\infty ^+$ remains elusive. This should be contrasted to the one-dimensional case where we have a full analogue of the two-sided Muckenhoupt theory.

It is customary for most of the one-dimensional papers studying $A_\infty ^+$ in the literature to set everything up with respect to some other measure $g$ and thus study the classes $A_p ^+(g)$. This setup is particularly suited for some symmetry arguments that allow one to show that a weight in $A_\infty ^+$ belongs to $A_p ^+$ for some $p>1$. This approach does not seem to be available in higher dimensions as many general results that hold for arbitrary measures in one-dimension fail  in higher dimensions. The current paper avoids this setup and thus proposes another self-contained strategy for proving that $\bigcup_{p>1} A_p^+=A_\infty ^+$. We plan to pursue the higher dimensional analogues in a future work.

\section{Notation} We use the letters $C,c$ to denote generic positive constants which might change even in the same line of text. We write $A\lesssim B$ if $A\leq C B$ and $A\eqsim B$ if $A\lesssim B$ and $B\lesssim A$. Throughout the text $w$ is a nonnegative, locally integrable function on the real line and we write $w(a,b)\coloneqq \int_{(a,b)}w(t)dt$. Finally, given $\beta\in(0,1)$ and an interval $(a,b)\subset \R$ we say that a function $f:(a,b)\to \R$ lies in the local H\"older class $C^\beta(a,b)$ if for all compact $K\subset (a,b)$ and all $x,y\in K$ we have that
$|f(x)-f(y)|\lesssim_K |x-y|^\beta$ for all $x,y\in K$.

\section{Preliminaries} In this section we collect classical results about one-sided Muckenhoupt weights that we will need throughout the paper.

The first is a version of the classical rising sun lemma adjusted to our setup and is a minor modification of \cite{Saw}*{Lemma 2.1}.

\begin{lemma}\label{l.rsun} Let $\lambda>0$ and $f\geq 0$ with compact support. Then
	\[
	\{x\in\R:\, \M^+f(x)>\lambda\}=\bigcup_j (a_j,b_j)
	\]
where the intervals $\{(a_j,b_j)\}_j$ are pairwise disjoint and for every $j$ we have that $\fint _x ^{b_j} f> \lambda $ for all $x\in(a_j,b_j)$. In particular $\fint_{a_j} ^{b_j} f=\lambda$ for all $j$.
\end{lemma}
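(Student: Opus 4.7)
The plan is to adapt the classical rising sun construction to the forward maximal operator via the auxiliary function $F(x)\coloneqq \int_0^x f(t)\,dt-\lambda x$, which is continuous by local integrability of $f$. The pivotal observation is that $\M^+f(x)>\lambda$ if and only if there exists some $y>x$ with $F(y)>F(x)$, since this is just a rewriting of $\int_x^y f>\lambda(y-x)$. Because $x\mapsto \frac{1}{h}\int_x^{x+h}f$ is continuous for each fixed $h>0$, the operator $\M^+f$ is lower semicontinuous, so the superlevel set $\Omega\coloneqq\{\M^+f>\lambda\}$ is open and canonically decomposes as a disjoint union of open intervals $\bigcup_j(a_j,b_j)$. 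A short estimate exploiting the compactness of $\operatorname{supp} f$ together with $\lambda>0$ will show that $\M^+f(x)\to 0$ as $|x|\to\infty$, so every endpoint $a_j,b_j$ is finite and lies in $\R\setminus\Omega$; in the language of $F$ this translates into $F(y)\le F(a_j)$ for all $y>a_j$ and $F(y)\le F(b_j)$ for all $y>b_j$.

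The core step is to show, for arbitrary $x\in(a_j,b_j)$, that $F(b_j)>F(x)$. Since $F$ is continuous and $F(y)\to-\infty$ as $y\to+\infty$ (because $f$ has compact support), the supremum $\sup_{y\ge x}F(y)$ is attained at some $y^\ast\ge x$. Maximality forces $y^\ast\notin\Omega$, for otherwise there would be a $z>y^\ast$ with $F(z)>F(y^\ast)$. Since $x\in\Omega$ implies $F(y^\ast)>F(x)$, we also have $y^\ast>x$. But every point of $(x,b_j)$ lies in $\Omega$, so necessarily $y^\ast\ge b_j$. On the other hand $b_j\notin\Omega$ gives $F(b_j)\ge F(y)$ for every $y\ge b_j$, in particular $F(b_j)\ge F(y^\ast)$, and maximality then forces $F(b_j)=F(y^\ast)>F(x)$.

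Rewriting $F(b_j)>F(x)$ yields $\fint_x^{b_j}f>\lambda$ for every $x\in(a_j,b_j)$, which is the main assertion. For the boundary identity $\fint_{a_j}^{b_j}f=\lambda$, I will let $x\to a_j^+$ and use continuity of $F$ to deduce $F(b_j)\ge F(a_j)$, while the endpoint relation $a_j\notin\Omega$ simultaneously gives $F(b_j)\le F(a_j)$; the two together imply $F(a_j)=F(b_j)$, which is precisely $\int_{a_j}^{b_j}f=\lambda(b_j-a_j)$. The most delicate point in the argument is the identification $y^\ast\ge b_j$: one needs the simple but essential observation that any maximizer of $F$ on a right half-line must lie outside $\Omega$, forcing it past the right endpoint of the component in which $x$ sits.
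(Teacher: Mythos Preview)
Your argument is correct. The use of the auxiliary function $F(x)=\int_0^x f-\lambda x$ and the maximizer $y^\ast$ on $[x,\infty)$ is clean, and the chain $y^\ast\notin\Omega$, $y^\ast>x$, hence $y^\ast\ge b_j$, hence $F(b_j)=F(y^\ast)>F(x)$, is fully justified. The endpoint identity $F(a_j)=F(b_j)$ via the two opposite inequalities is also fine.

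The route, however, differs from the paper's. The paper does not construct the rising sun function at all: it quotes Sawyer's lemma to obtain the decomposition $E_\lambda=\bigcup_j(a_j,b_j)$ together with the \emph{weak} inequality $\fint_x^{b_j}f\ge\lambda$ and the identity $\fint_{a_j}^{b_j}f=\lambda$, and then upgrades the weak inequality to a strict one by a short case analysis. Given $x\in(a_j,b_j)$, one picks $r>x$ with $\fint_x^r f>\lambda$ and treats separately the possibilities $r=b_j$, $r>b_j$ (where $\fint_{b_j}^r f\le\lambda$ forces $\fint_x^{b_j}f>\lambda$), and $r<b_j$ (where one slides $r$ to the maximal $s$ with $\fint_r^s f\ge\lambda$, observes $s\ge b_j$, and concludes similarly). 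Your approach trades this case split for a single maximization argument and is self-contained, avoiding the reference to \cite{Saw}; the paper's approach is shorter on the page precisely because it outsources the basic decomposition. Both lead to the same conclusion with comparable effort once the quoted input is taken into account.
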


\begin{proof}Let $E_\lambda\coloneqq \{x\in\R:\, \M^+f(x)>\lambda\}$.  By \cite{Saw}, we have that  $E_\lambda=\bigcup_j (a_j,b_j)$ where $\{(a_j,b_j)\}_j$ is a disjoint collection such that $\fint_{a_j}^{b_j}f = \lambda$ for every $j$ and such that $\fint_{x}^{b_j}f \geq \lambda$ whenever $x \in (a_j, b_j)$ for some $j$. Now, for every $x\in (a_j,b_j)$ there exists $r>x$ such that $\fint_x ^r f>\lambda.$ If $r=b_j$ then we are done. If $r>b_j$ then since $\fint_{b_j} ^r f\leq \lambda$ we can conclude again that $\fint_x ^{b_j}f>\lambda$. Finally, if $r<b_j$ we consider the maximal $s\geq r$ such that $\fint_r ^s f\geq\lambda$. Necessarily $s\geq b$ due to the maximality of $s$. From this and the fact that $\fint_{b_j} ^sf\leq \lambda$ we conclude that $\fint_r ^{b_j} f\geq \lambda$. Thus $\int_x ^{b_j} f=\int_x ^r f +\int_r ^{b_j}f >\lambda(b_j-x)$ and we are done.
\end{proof}
 Whenever we have a one-sided weight $w\in A_\infty ^+$ we will need to use quantitative estimates of the type
\[
\frac{w(E)}{w(a,c)}\lesssim \big(\frac{|E|}{|(b,c)|}\big)^\delta
\]
whenever $a<b<c$ and $E\subseteq (a,b)$. In fact it is well known that the estimate above provides an equivalent definition for the class $A_\infty ^+$; see for example \cite{MPT}. Another equivalent way to define $A_\infty ^+$ goes through the validity of appropriate reverse H\"older inequalities.

The following theorem summarizes these equivalences. Note that the direct implications (i) and (ii) below are directly taken from \cite{MT}. The optimality of these estimates for $w\in A_\infty ^+$, contained in (iii), appears to be new as the authors in \cite{MT} didn't pursue this direction.
\begin{theorem}\label{t.ainfty} Let $w$ be a nonnegative, locally integrable function on $\R$. Then the following hold.
	\begin{enumerate}
\item[(i)] If $w\in A_\infty ^+$ then for $0<\epsilon\leq \frac{1}{2[w]_{A_\infty ^+}}$ and for all $a<b<c$ we have the one-sided reverse H\"older inequality
\[
|(b,c)|^\epsilon \int_{(a,b)} w^{1+\epsilon}\leq 2 \bigg(\int_{(a,c)} w \bigg)^{1+\epsilon}.
\]
\item[(ii)] If $w\in A_\infty ^+$ then for $0<\epsilon\leq \frac{1}{2[w]_{A_\infty ^+}}$ and for all $a<b<c$ and for all measurable sets $E\subseteq (a,b)$ we have
\[
\frac{w(E)}{w(a,c)}\leq 2 \big(\frac{|E|}{|(b,c)|}\big)^{\frac{\epsilon}{1+\epsilon}}\leq 2 \big(\frac{|E|}{|(b,c)|}\big)^{\frac{1}{3[w]_{A_\infty ^+}}}.
\]
\item[(iii)] Conversely, if the conclusion of (i) or (ii) holds for some $0<\epsilon<1$ then $w\in A_\infty ^+$ and $[w]_{A_\infty ^+}\lesssim \frac{1}{\epsilon}$; thus (i), (ii) are best possible up to numerical constants.
\end{enumerate}
\end{theorem}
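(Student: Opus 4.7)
Parts (i) and (ii) are the content of \cite{MT}, and we refer to that paper for their proof; for completeness we note that (ii) can also be derived directly from (i) by a single application of H\"older's inequality. Indeed, for $E\subseteq (a,b)$,
\[
w(E)\leq |E|^{\epsilon/(1+\epsilon)}\biggl(\int_{(a,b)} w^{1+\epsilon}\biggr)^{1/(1+\epsilon)}\leq 2^{1/(1+\epsilon)}\frac{w(a,c)}{|(b,c)|^{\epsilon/(1+\epsilon)}}|E|^{\epsilon/(1+\epsilon)},
\]
which upon rearranging yields (ii) with exponent $\delta = \epsilon/(1+\epsilon)$; the further inequality with exponent $1/(3[w]_{A_\infty^+})$ follows from $\epsilon\leq 1/(2[w]_{A_\infty^+})$.

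The novel content is the converse (iii). Assume (i) holds with some $\epsilon\in(0,1)$ and set $\delta=\epsilon/(1+\epsilon)$. The goal is to show that for every interval $I=(a,b)$ one has
\[
\int_I \M^-(w\ind_I)(x)\,dx\lesssim \frac{1}{\epsilon}\, w(I).
\]
The plan is to combine two ingredients: the classical strong-type bound $\|\M^-\|_{L^{1+\epsilon}\to L^{1+\epsilon}(\R)}\lesssim 1/\epsilon$, and the reverse H\"older hypothesis (i). First, H\"older's inequality on $I$ followed by the $L^{1+\epsilon}$ bound gives
\[
\int_I \M^-(w\ind_I)\leq |I|^{\delta}\biggl(\int_{\R}(\M^-(w\ind_I))^{1+\epsilon}\biggr)^{1/(1+\epsilon)}\lesssim \frac{1}{\epsilon}\,|I|^{\delta}\biggl(\int_I w^{1+\epsilon}\biggr)^{1/(1+\epsilon)}.
\]
Next, (i) is applied with $c>b$ chosen so that $w(b,c)=w(I)$ (admissible because (i) forces $w(a,c)\to\infty$ as $c\to\infty$), which produces the bound $(\int_I w^{1+\epsilon})^{1/(1+\epsilon)}\lesssim w(I)/|(b,c)|^{\delta}$. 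What remains is to show that the factor $(|I|/|(b,c)|)^{\delta}$ is bounded by a numerical constant. This is the main obstacle: the ratio $|I|/|(b,c)|$ can in principle be large when $w$ concentrates just to the right of $b$, as weights like $w(x)=e^{\beta x}$ demonstrate. To circumvent this, the blunt $L^{1+\epsilon}$ step will be replaced by a dyadic stopping-time decomposition of $\M^-(w\ind_I)$ at levels $\lambda_k = 2^k w(I)/|I|$. On each rising-sun component $J$ of $\{\M^-(w\ind_I)>\lambda_k\}$, the lower bound $w(J)\geq \lambda_k|J|$ combined with (ii) applied to the triple $(c_J, d_J, b)$ is to produce a geometric decay $|\{\M^-(w\ind_I)>\lambda_{k+1}\}\cap J|\leq r|J|$ with $r\leq (1-c\delta)/2$ for a numerical $c>0$. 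Summing the resulting geometric series yields $\int_I\M^-(w\ind_I)\lesssim (1-2r)^{-1}w(I)\lesssim (1/\delta)w(I)\lesssim (1/\epsilon)w(I)$, as required.

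The sharpness assertion at the end of (iii) is verified on the one-parameter family $w_\alpha(x)=x^{\alpha-1}\ind_{(0,\infty)}(x)$ for $\alpha\in(0,1)$. On $I=(0,1)$ direct computation shows that $f(y)=(x^\alpha-y^\alpha)/(\alpha(x-y))$ is monotone decreasing in $y\in[0,x)$, so the supremum defining $\M^-(w_\alpha\ind_I)(x)$ is attained at $y=0$, giving $\M^-(w_\alpha\ind_I)(x)=x^{\alpha-1}/\alpha$ and hence $\int_I\M^-(w_\alpha\ind_I)=1/\alpha^2 = w_\alpha(I)/\alpha$, so $[w_\alpha]_{A_\infty^+}\gtrsim 1/\alpha$. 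Meanwhile, $w_\alpha^{1+\epsilon}$ is locally integrable near $0$ precisely when $\epsilon<\alpha/(1-\alpha)$, so the largest exponent $\epsilon$ for which (i) could possibly hold is $\eqsim \alpha\eqsim 1/[w_\alpha]_{A_\infty^+}$. This matches the bound produced by (iii) up to numerical constants and therefore confirms that (i) and (ii) are sharp.
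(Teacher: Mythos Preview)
Your treatment of (i), (ii), and the sharpness example at the end is fine and matches the paper. The gap is in the converse (iii): after correctly identifying why the global application of H\"older plus (i) fails (the ratio $|I|/|(b,c)|$ is uncontrolled), you replace it by a level-set stopping-time scheme, but the crucial step is only announced, not carried out. You assert that ``(ii) applied to the triple $(c_J,d_J,b)$'' will yield $|\{\M^-(w\ind_I)>\lambda_{k+1}\}\cap J|\le r|J|$ with $r\le(1-c\delta)/2$. But (ii) bounds $w(E)$ from above in terms of $|E|$, not the other way around, so it is not clear how it could produce an upper bound on the Lebesgue measure of the next level set; at best the localized weak-type gives $r\le 1/2$, and the mechanism for the extra gain of $c\delta$ is missing. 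Since your whole estimate hinges on $1/(1-2r)\lesssim 1/\delta$, this is the entire content of the argument, and it is not there. You also do not address the case where the hypothesis is (ii) rather than (i).

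The paper's route avoids level sets altogether and fixes the ratio problem at the source. Given $I=(a,b)$, one writes $I=\bigcup_{j\ge 1}I_j$ with $I_j=(x_{j-1},x_j]$, $x_0=a$, and $x_j$ the midpoint of $(x_{j-1},b)$, so $|I_j|\eqsim|I_{j+1}|\eqsim|\bigcup_{k\ge j}I_k|$. Since $\M^-(w\ind_{I_j})$ vanishes to the left of $I_j$, one has
\[
\int_I \M^-(w\ind_I)\le \sum_j\int_{\cup_{k\ge j}I_k}\M^-(w\ind_{I_j})
\lesssim r'\sum_j \Big|\bigcup_{k\ge j}I_k\Big|^{1/r'}\Big(\int_{I_j}w^{r}\Big)^{1/r},
\]
with $r=1+\epsilon$, and then the one-sided reverse H\"older inequality is applied \emph{locally}, with $I_{j+1}$ playing the role of the extension interval $(b,c)$. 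Because $|I_{j+1}|\eqsim|\bigcup_{k\ge j}I_k|$ by construction, the problematic power of the length ratio is $\eqsim 1$, and one gets $\int_I\M^-(w\ind_I)\lesssim r'\sum_j w(I_j\cup I_{j+1})\lesssim r'\,w(I)$, i.e.\ $[w]_{A_\infty^+}\lesssim 1/\epsilon$. The case where only (ii) is assumed is handled first by showing that (ii) with exponent $\epsilon$ implies a reverse H\"older inequality with exponent $1+\epsilon/2$ (normalize $w(a,c)/|(b,c)|=1$, obtain a distributional bound $|\{w>\lambda\}\cap(a,b)|/|(b,c)|\lesssim\lambda^{-(1+\epsilon)}$, and integrate). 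The key idea you are missing is this dyadic decomposition of the \emph{domain} toward the right endpoint, which turns the global one-sided hypothesis into a family of local estimates with uniformly bounded geometry.
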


\begin{proof} The statement and proof of (i) is \cite{MT}*{Theorem 3.4}. Part (ii) follows trivially by an application of the standard H\"older inequality.
	
Let us move to the proof of (iii). First we observe that the validity of (ii) for some $0<\epsilon<1$ implies the reverse H\"older inequality
\[
|(b,c)|^\frac{\epsilon}{2}\int_{(a,b)} w^{1+\epsilon/2} \leq 6 \bigg(\int_{(a,c)}w\bigg)^{1+\epsilon/2}
\]
To see this we can assume that $w(a,c)/|(b,c)|=1$. Setting $E_\lambda\coloneqq\{x\in(a,b):\, w(x)>\lambda\}$ we have
\[
\frac{|E_\lambda|}{|(b,c)|}\leq \frac{1}{\lambda}\frac{w(E_\lambda)}{w(a,c)}\leq 2 \frac{1}{\lambda}\frac{|E_\lambda|^\frac{\epsilon}{1+\epsilon}}{|(b,c)|^\frac{\epsilon}{1+\epsilon}}
\]
and thus we have proved the estimate
\[
\frac{|E_\lambda|}{|(b,c)|}\leq 2 ^{1+\epsilon}\frac{1}{\lambda^{1+\epsilon}}.
\]
Using (ii) again this implies $\frac{w(E_\lambda)}{w(a,c)}\leq 2^{1+\epsilon}\frac{1}{\lambda^\epsilon}$. Now 
\[\begin{split}
\int_{(a,b)} w^{1+\epsilon/2}&=\int_{(a,b)}w^\frac{\epsilon}{2}w \leq w(a,b)+\frac{\epsilon}{2}\int_1 ^\infty \lambda^{\epsilon/2-1} w(E_\lambda) d \lambda
\\
&\leq w(a,b)+2^{\epsilon} \epsilon\int_1 ^\infty\frac{w(a,c)}{\lambda^{1+\epsilon/2}}d\lambda  \leq 5 w(a,c)=5|(b,c)|.
\end{split}
\]
This shows the claimed reverse H\"older inequality.

Now we show that a reverse H\"older inequality with exponent $1+\epsilon$ implies that $[w]_{A_\infty ^+}\lesssim 1/\epsilon$. For this fix $a<b<c$ and set $r\coloneqq 1+\epsilon$. We write $(a,b)=\cup_{j=1} ^\infty I_j$ where $I_j=(x_{j},x_{j+1}]$, and $x_0\coloneqq a$, $x_1\coloneqq \frac{a+b}{2}$ and $x_j\coloneqq \frac{x_{j-1}+b}{2}$.

Assuming that a reverse H\"older inequality, as in (i), holds with exponent $r=1+\epsilon$ we can use the bound $\|\M^-\|_{L^r\to L^r}\lesssim r'$ to estimate
\[
\begin{split}
\int_{(a,b)}\M^-(w\ind_{(a,b)})&\leq   \sum_{j=1} ^\infty \int_{(a,b)} \M^-(w\ind_{I_j})=\sum_{j=1} ^\infty \int_{\cup_{k\geq j} I_k} \M^-(w\ind_{I_j})
\\
	& \lesssim r' \sum_{j=1} ^\infty \Abs{\bigcup_{k\geq j}I_k}^\frac{1}{r'}  \bigg( \int_{I_j}w^r\bigg)^\frac{1}{r}
	\\
	&\eqsim r'\sum_{j=1} ^\infty |I_j|^\frac{1}{r'} \bigg(\int_{I_j\cup I_{j+1}} w\bigg) |I_{j+1}|^{-\frac{1}{r'}}\lesssim r' w(a,b)
\end{split}
\]
and thus $[w]_{A_\infty ^+}\lesssim r'\eqsim \frac{1}{\epsilon}$ as we wanted.
\end{proof}

\section{Proof of the main theorem}
We divide the proof of the main theorem into two parts. In the first part we show Corollary~\ref{c.main1} which also shows that (i) implies (ii) in Theorem~\ref{t.main}.

\begin{proof}[Proof of Corollary~\ref{c.main1}] Let $E\subset \R$ be a compact subset of the real line with $0<w(E)<+\infty$ and let $\alpha>0$. We use Lemma~\ref{l.rsun} to write
	\[
	\{x\in\R:\, \M^+ \ind_E(x)>\alpha\}=\bigcup_j (a_j,b_j),
	\]
where $\fint_{x}^{b_j} \chi_E > \alpha$ for all $x \in (a_j, b_j)$ and $\fint_{a_j}^{b_j}\chi_E = \alpha$ for each $j$.
Let $(a,b)\in\{(a_j,b_j)\}_j$. By Lemma~\ref{l.rsun} we have that $\fint_{a} ^{b} \ind_E=\alpha$ and $\fint_x ^b \ind_E > \alpha $ for all $x\in (a,b)$. Using an idea from \cite{Reyes} we choose an increasing sequence $\{x_k\}_{k=0} ^\infty$ such that $x_0\coloneqq a$, $(a,b)=\cup_{k\geq 1}(x_{k-1},x_k]$, and $\int_{x_{k-1}} ^{x_k} \ind_E =\int_{x_k} ^b \ind_E$ for all $k\geq 1$. Then we can estimate for all $k\geq 1$
\[
\begin{split}
w((x_{k-1},x_k]\setminus E)	&=\frac{w((x_{k-1},x_k]\setminus E)}{w(x_{k-1},x_{k+1})} w(x_{k-1},x_{k+1})
\\
&\lesssim \bigg(\frac{|(x_{k-1},b)\setminus E|}{|(x_k,x_{k+1})|}\bigg)^\frac{1}{3[w]_{A_\infty ^+}} w(x_{k-1},x_{k+1})
\end{split}
\]
by (ii) of Theorem~\ref{t.ainfty}. By Lemma~\ref {l.rsun} we can conclude that $|(x_{k-1},b)\setminus E|\leq \frac{1-\alpha}{\alpha}\int_{x_{k-1}} ^b \ind_E$. Remembering the definition of the sequence $\{x_k\}_k$ we can further calculate
\[
\begin{split}
\int_{x_{k-1}} ^b\ind_E=\int_{x_{k-1}} ^{x_k} \ind_E + \int_{x_k} ^b \ind_E = 2\int_{x_k} ^b \ind_E.
\end{split}
\]
By a single recursion of this formula and another use of the definition of the sequence $\{x_k\}_k$ we thus have
\[
\int_{x_{k-1}} ^b\ind_E=4\int_{x_{k+1}} ^b \ind_E =4\int_{x_k} ^{x_{k+1}}\ind_E\leq 4(x_{k+1}-x_k).
\]
We have proved that
\[
w((x_{k-1},x_{k}]\setminus E)\lesssim (1-\alpha)^{(3[w]_{A_\infty ^+})^{-1}}w(x_{k-1},x_{k+1}).
\]
Summing over $k\geq 1$ we conclude that for every $j$ we have
\[
w((a_j,b_j)\setminus E))\lesssim (1-\alpha)^{(3[w]_{A_\infty ^+})^{-1}} w(a_j,b_j).
\]
Summing over $j$ we get the desired estimate
\[
\Cw(\alpha)-1 \lesssim (1-\alpha)^{(c[w]_{A_\infty ^+})^{-1}}
\]
whenever $\alpha>1-e^{-c[w]_{A_\infty ^+}}$, for some numerical constant $c>1$.

We now proceed to exhibit the optimality of the Solyanik estimate just proved. For this let us assume that we have an estimate $\Cw(\alpha)-1\lesssim(1-\alpha)^\frac{1}{\beta}$ for $\alpha>1-e^{-\beta}$. We will prove that for all $a<b<c$ and measurable $E\subset (a,b)$ we have the estimate
\[
\frac{w(E)}{w(a,c)}\lesssim \bigg(\frac{|E|}{|(b,c)|}\bigg)^\frac{1}{\beta}
\]
By Theorem~\ref{t.ainfty} this will imply that $w\in A_\infty ^+$ and $[w]_{A_\infty ^+}\lesssim \beta $ as claimed.

We now fix real numbers $a<b<c$ and consider a measurable set $E\subseteq (a,b)$. We set $E'\coloneqq (a,c)\setminus E$ and consider two cases.

If $|E|/|(b,c)|\geq e^{-\beta}$ then
\[
\frac{w(E)}{w(a,c)}\leq 1 = e e^{-1}\leq e \bigg(\frac{|E|}{|(b,c)|}\bigg)^\frac{1}{\beta}
\]
and we are done.

In the complementary case we have for all $x\in(a,b)$
\[
\begin{split}
\M^+\ind_{E'}(x)&\geq \frac{|E'\cap(x,c)|}{|(x,c)|}=\frac{|(b,c)|+|(x,b)|-|(x,b)\cap E|}{|(x,c)|}\geq 1 -\frac{|(x,b)\cap E|}{|(x,c)|}
\\
& \geq 1 -\frac{|E|}{|(b,c)|}>1-e^{-\beta}.
\end{split}
\]
Obviously $\M^+\ind_{E'}=1$ on $(b,c)$ and thus $(a,c) \subseteq \big\{x\in\R:\, \M^+\ind_{E'}(x)\geq 1-|E|/|{b,c}| \big\}$. As $1-|E|/|(b,c)|>1-e^{-\beta}$ we can use the assumed Solyanik estimate to conclude that
\[
w(a,c)\leq \Cw\big(1-|E|/|(b,c)|\big)w(E')=\Cw\big(1-|E|/|(b,c)|\big) (w(a,c)-w(E))
\]
and thus
\[
w(E)\leq \big(\Cw\big(1-|E|/|(b,c)|\big)-1\big)w(a,c)\lesssim \big(\frac{|E|}{|(b,c)|}\big)^\frac{1}{\beta}w(a,c)
\]
as we wanted.
\end{proof}

It is trivial that (ii) implies (iii) in Theorem~\ref{t.main}  so we move on to prove that (iii) implies (i). It will clearly suffice to show the following.

\begin{proposition}\label{p.restricted} Suppose that $w$ is a non-negative, locally integrable function that satisfies $\Cw(\alpha_o)<+\infty$ for some $\alpha_o\in(0,1)$. Then $\M^+$ is of restricted weak type $(p,p)$ with respect to $w$ for $p=\log\Cw(\alpha_o)/\log(1/\alpha_o)$, with constant $\Cw(\alpha_o)^\frac{1}{p}$.
\end{proposition}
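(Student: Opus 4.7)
The plan is to iterate the single tauberian bound $w(\{\M^+\ind_F>\alpha_o\})\leq \Cw(\alpha_o)w(F)$ across the geometric levels $\alpha_o^k$, and then sum via a layer-cake decomposition to obtain the weak-type $(p,p)$ estimate. The heart of the argument is the \emph{iteration inclusion}
\[
\{\M^+\ind_E>\alpha\beta\}\subseteq\{\M^+\ind_{E_\beta}>\alpha\}\qquad\text{up to a Lebesgue null set,}
\]
valid for all $\alpha,\beta\in(0,1)$, where $E_\beta\coloneqq\{\M^+\ind_E>\beta\}$.

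Granting this inclusion, set $F_0\coloneqq E$ and $F_{k+1}\coloneqq\{\M^+\ind_{F_k}>\alpha_o\}$. Iterating the hypothesis gives $w(F_k)\leq \Cw(\alpha_o)^kw(E)$, while applying the iteration inclusion with $\alpha=\alpha_o,\beta=\alpha_o^k$ together with monotonicity of $\M^+$ yields, by induction on $k$, the a.e.\ containment $\{\M^+\ind_E>\alpha_o^{k+1}\}\subseteq F_{k+1}$. Therefore $w(\{\M^+\ind_E>\alpha_o^k\})\leq \Cw(\alpha_o)^kw(E)$ for every $k\ge 0$. For general $\lambda\in(0,1)$, choosing $k\ge 0$ with $\alpha_o^{k+1}<\lambda\leq\alpha_o^k$ and using $\alpha_o^p=\Cw(\alpha_o)^{-1}$ from the definition of $p$, one obtains
\[
\lambda^p w(\{\M^+\ind_E>\lambda\})\leq \alpha_o^{kp}\Cw(\alpha_o)^{k+1}w(E)=\Cw(\alpha_o)w(E),
\]
which is precisely the restricted weak-type $(p,p)$ bound with constant $\Cw(\alpha_o)^{1/p}$.

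The main obstacle is therefore the iteration inclusion itself, and I would split on whether $x\in E_\beta$. If $x\in E_\beta$, then one-sided Lebesgue differentiation applied to $\ind_{E_\beta}$ gives $\M^+\ind_{E_\beta}(x)=1>\alpha$ for a.e.\ such $x$, and we are done. Suppose now $x\in\{\M^+\ind_E>\alpha\beta\}\setminus E_\beta$. By Lemma~\ref{l.rsun}, $x$ lies in a unique component $(c,d)$ of $\{\M^+\ind_E>\alpha\beta\}$ with $\fint_y^d\ind_E>\alpha\beta$ for all $y\in(c,d)$. Since $E_\beta\subseteq\{\M^+\ind_E>\alpha\beta\}$ and both sets are open, every component of $E_\beta$ meeting $(c,d)$ must lie entirely inside $(c,d)$; list those contained in $(x,d)$ as $\{I_n\}_n$, and observe that $x\notin E_\beta$ forbids any component straddling $x$. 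Lemma~\ref{l.rsun} applied to $E_\beta$ gives the rising-sun identity $\int_{I_n}\ind_E=\beta|I_n|$, and since $E\subseteq E_\beta$ up to a Lebesgue null set (one-sided Lebesgue differentiation again) the complement $(x,d)\setminus E_\beta$ contributes nothing to $\int_x^d\ind_E$. Assembling,
\[
\alpha\beta(d-x)<\int_x^d\ind_E=\sum_n\int_{I_n}\ind_E=\beta\sum_n|I_n|=\beta\,|E_\beta\cap(x,d)|,
\]
so $|E_\beta\cap(x,d)|>\alpha(d-x)$ and hence $\M^+\ind_{E_\beta}(x)\geq\fint_x^d\ind_{E_\beta}>\alpha$, as required.
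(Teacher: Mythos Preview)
Your proof is correct and follows essentially the same route as the paper. The paper packages your ``iteration inclusion'' as a separate lemma on halo extensions (Lemma~\ref{l.haloiter}), proving $\mathcal H_\lambda^+(E)\subseteq \mathcal H_{\lambda/\alpha}^+(\mathcal H_\alpha^+(E))$ via the same rising-sun computation you carry out; your argument is in fact slightly cleaner in that you iterate directly at level $\alpha_o$ rather than introducing an auxiliary $\alpha>\alpha_o$ and letting $\alpha\to\alpha_o^+$ at the end. One cosmetic remark: since $E_\beta$ is open, $\M^+\ind_{E_\beta}(x)=1$ for \emph{every} $x\in E_\beta$, so the appeal to one-sided Lebesgue differentiation in that case is unnecessary (though the a.e.\ inclusion $E\subseteq E_\beta$ you use later does require it).
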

The proof of Proposition~\ref{p.restricted} relies on the notion of the \emph{Halo extension} of a set $E$, defined as follows. Given $\lambda\in(0,1)$ the Halo extension of $E$ is
\[
\mathcal H_\lambda ^+(E)\coloneqq \{x\in\R:\, \M^+\ind_E(x)>\lambda\}.
\]
We also set $\mathcal H^{+,0} _\lambda(E)\coloneqq E$ and for a positive integer $k>1$
\[
\mathcal H^{+,k} _\lambda(E)\coloneqq \mathcal H_\lambda ^+ (\mathcal H_\lambda ^{+,k-1}(E)).
\]
The heart of the matter is the following lemma.
\begin{lemma}\label{l.haloiter} Let $0<\lambda< \alpha<1$ and $E\subset \R$ be a measurable set with $0<w(E)<+\infty$. Then
	\[
	\mathcal H ^+ _\lambda(E)\subseteq \mathcal H^{+,N} _\alpha (E),
	\]
where
\[
N =\Big\lceil\frac{\log\frac{1}{\lambda}}{\log\frac{1}{\alpha}}\Big\rceil.
\]	
Here $\lceil x\rceil$ denotes the smallest integer which is no less than $x$.
\end{lemma}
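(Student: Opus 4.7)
The plan is to factor the claim through a single multiplicative step and then iterate. My key claim will be:

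\smallskip
\noindent\textbf{Key Claim.} For any $\mu,\nu\in(0,1)$ and any measurable $E\subset\R$, one has $\mathcal H^+_{\mu\nu}(E)\subseteq \mathcal H^+_\mu(\mathcal H^+_\nu(E))$.
\smallskip

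Granted this, the lemma follows easily: taking $\mu=\alpha$, $\nu=\alpha^{k-1}$ and using the monotonicity $A\subseteq B\Rightarrow \mathcal H^+_\alpha(A)\subseteq \mathcal H^+_\alpha(B)$, a straightforward induction on $k$ gives $\mathcal H^+_{\alpha^k}(E)\subseteq \mathcal H^{+,k}_\alpha(E)$. Since $N$ was chosen precisely so that $\alpha^N\leq \lambda$, we conclude $\mathcal H^+_\lambda(E)\subseteq \mathcal H^+_{\alpha^N}(E)\subseteq \mathcal H^{+,N}_\alpha(E)$.

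To prove the Key Claim, set $F\coloneqq \mathcal H^+_\nu(E)$ and apply the rising-sun Lemma~\ref{l.rsun} to $\ind_E$ at both the levels $\mu\nu$ and $\nu$: write $\mathcal H^+_{\mu\nu}(E)=\bigsqcup_j(a_j,b_j)$ with $\fint_y^{b_j}\ind_E>\mu\nu$ for every $y\in(a_j,b_j)$, and $F=\bigsqcup_k(c_k,d_k)$ with $\fint_{c_k}^{d_k}\ind_E=\nu$. The inequality $\nu>\mu\nu$ guarantees $F\subseteq \mathcal H^+_{\mu\nu}(E)$, so each $(c_k,d_k)$ is entirely contained in some $(a_j,b_j)$. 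Fixing one such $(a,b)=(a_j,b_j)$, Lebesgue differentiation applied to $M^+\ind_E\leq \nu<1$ on $(a,b)\setminus F$ forces $\ind_E=0$ almost everywhere there.

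It now suffices to show $M^+\ind_F(y)>\mu$ for every $y\in(a,b)$. If $y\in F$ this is immediate, since $\ind_F\equiv 1$ on a right neighbourhood of $y$. If $y\in(a,b)\setminus F$, I take $h=b-y$; every $(c_k,d_k)$ that meets $(y,b)$ must in fact be contained in $(y,b)$ (here I use both $(c_k,d_k)\subseteq(a,b)$ and $y\notin F$), so
\[
\int_y^b\ind_E=\sum_{(c_k,d_k)\subseteq(y,b)}\nu(d_k-c_k)=\nu\,|F\cap(y,b)|.
\]
Combined with $\int_y^b\ind_E>\mu\nu(b-y)$, supplied by the rising sun at level $\mu\nu$, this yields $|F\cap(y,b)|>\mu(b-y)$, whence $M^+\ind_F(y)\geq \fint_y^b\ind_F>\mu$, as required.

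The main obstacle is arranging the exact identity $\int_y^b\ind_E=\nu|F\cap(y,b)|$ for $y\in(a,b)\setminus F$: this is precisely what upgrades the bound from $\mu\nu$ to $\mu$ and makes the iteration effective. It hinges on the nesting $\mathcal H^+_\nu(E)\subseteq \mathcal H^+_{\mu\nu}(E)$, which ensures no rising-sun interval of $F$ straddles the boundary of $(a,b)$. Working merely from the pointwise inequality $\ind_E\leq \ind_F$ would only deliver $M^+\ind_F(y)>\mu\nu$, losing exactly the factor that makes the geometric iteration close up.
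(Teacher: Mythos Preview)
Your proof is correct and follows essentially the same approach as the paper: both establish the single multiplicative step $\mathcal H^+_{\mu\nu}(E)\subseteq \mathcal H^+_\mu(\mathcal H^+_\nu(E))$ (the paper phrases it as $\mathcal H^+_\lambda(E)\subseteq \mathcal H^+_{\lambda/\alpha}(\mathcal H^+_\alpha(E))$) by applying the rising sun lemma at two levels and using the identity $\int_y^b\ind_E=\nu\,|F\cap(y,b)|$, then iterate. Your version is slightly more explicit in invoking Lebesgue differentiation for the inclusion $E\subseteq F$ a.e., which the paper uses implicitly in its chain of equalities.
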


\begin{proof} Let $(a,b)$ be one of the component intervals of $\mathcal H_\lambda ^+ (E)$ provided by Lemma~\ref{l.rsun}. The same lemma allows us to write $\mathcal H^+ _\alpha(E)$ as the union of disjoint intervals $\cup_{j} (\xi_j,\eta_j)$, where $\fint_{\xi_j}^{\eta_j} \chi_E = \alpha$. Since $\lambda<\alpha<1$ we have that $a,b\notin(\xi_j,\eta_j)$ for any $j$. Indeed, if say $a\in(\xi_j,\eta_j)$ then there exists some $h>0$ such that
	\[
	\alpha h <\int_a ^{a+h} \ind_E \leq \lambda h
	\]
since $a\notin\mathcal H^+ _\lambda(E)$. This however contradicts the condition $\lambda<\alpha$. Similarly we see that $b\notin(\xi_j,\eta_j)$. Thus $\mathcal H_{\alpha} ^+(E)\cap (a,b)=\cup_{j\in J}(\xi_j,\eta_j)$. For $x\in (a,b)\setminus \mathcal H_\alpha ^+ (E)$ let $J_x\coloneqq \{j\in J:\, \xi_j>x\}$. Then if $x\in (a,b)\setminus \mathcal  H_\alpha ^+(E) $ we can calculate
\[
|\mathcal H^+ _\alpha(E)\cap (x,b)|=\sum_{j\in J_x} |(\xi_j,\eta_j)|=\frac{1}{\alpha}\Abs{\bigcup_{j\in J_x}(\xi_j,\eta_j)\cap E}=\frac{1}{\alpha}|E\cap (x,b)|>\frac{\lambda}{\alpha}|(x,b)|,
\]
the last inequality following by Lemma~\ref{l.rsun}. Thus $(a,b)\subseteq \mathcal H^+ _{\frac{\lambda}{\alpha}}(\mathcal H^+ _\alpha(E))$ and accordingly $\mathcal H^+ _\lambda(E)\subseteq \mathcal H^+ _{\frac{\lambda}{\alpha}}(\mathcal H^+ _\alpha(E))$.

Let $K\geq 1$ be the smallest integer such that $\lambda/\alpha^K>\alpha$. Iterating the estimate above we conclude that
\[
\mathcal H_\lambda ^+ (E)\subseteq \mathcal H_{\frac{\lambda}{\alpha^K}} ^+ (\mathcal H_\alpha ^{+,K} (E))\subseteq \mathcal H_\alpha ^{+,K+1}(E).
\]
Note that $K$ satisfies $\lambda/\alpha^{K-1}< \alpha$ and thus $K+1 = \lceil \log(1/\lambda)/\log(1/\alpha) \rceil.$
\end{proof}

We can now give the proof of Proposition~\ref{p.restricted}.

\begin{proof}[Proof of Proposition~\ref{p.restricted}] Let $\lambda\in(0,1)$. Since $\Cw(\alpha_o)<+\infty$ we trivially get that for all $\lambda>\alpha_o$ and every $E\subset \R$
	\[
	w(\{x\in\R:\,\M^+\ind_E(x)>\lambda\})\leq \Cw(\alpha_o) w(E)\leq \frac{\Cw(\alpha_o)}{\lambda^q}w(E)
	\]
for any $q\geq 1$. It thus suffices to consider the case $0<\lambda<\alpha_o<1$. Let $1>\alpha>\alpha_o$. Then for any set $S\subset \R$ we have
\[
w(\mathcal H_\alpha ^+(S))\leq \Cw(\alpha_o)w(S).
\]
By Lemma~\ref{l.haloiter} applied for $\alpha>\lambda>0$ we can conclude that
\[
w(\mathcal H_\lambda ^+(E))\leq \Cw(\alpha_o) ^N w(E)
\]
with $N = \lceil {\log\frac{1}{\lambda}}/{\log\frac{1}{\alpha}} \rceil \leq {\log\frac{1}{\lambda}}/{\log\frac{1}{\alpha}} +1$. We get
\[
w(\{x\in\R:\,\M^+\ind_E(x)>\lambda\})\leq \Cw(\alpha_o) \big(\frac{1}{\lambda}\big)^{\frac{\log\Cw(\alpha_o)}{\log\frac{1}{\alpha}}}w(E)
\]
for all $\alpha>\alpha_o$. Letting $\alpha\to \alpha_o ^+$ we conclude that $\M^+$ is of restricted weak type $(p,p)$ with respect to $w$ for $p=\log\Cw(\alpha_o)/\log(1/\alpha_o)$, with constant $\Cw(\alpha_o)^\frac{1}{p}$.
\end{proof}

We conclude this section with the proof of Corollary~\ref{c.main2}

\begin{proof}[Proof of Corollary~\ref{c.main2}] Assuming that $w\in A_\infty ^+$ with constant $[w]_{A_\infty ^+}$, Theorem~\ref{t.main} implies that $\Cw(\alpha_o)\lesssim 1$ for $\alpha_o=1-e^{-c[w]_{A_\infty ^+}}$, and $c>1$ is a numerical constant. Now Proposition~\ref{p.restricted} implies that $\M^+$ is of restricted weak type $(p,p)$ with respect to $w$ for $p \eqsim (\log(1/\alpha_o))^{-1}\eqsim e^{c[w]_{A_\infty ^+}}$ and thus $w\in A_p ^+$ for $p>e^{c[w]_{A_\infty ^+}}$. By carefully examining the interpolation constants and using the estimate
	\[
	\|\M^+ \|_{L^p(w)\to L^p(w)}\gtrsim [w]_{A_p ^+} ^\frac{1}{p}
	\]
which is contained in \cite{MT}*{Theorem 1.4} we can conclude that $[w]_{A_p ^+}\leq e^{e^{c[w]_{A_\infty ^+}}}$ for some numerical constant $c>0$. See also \cite{HP2}*{p. 21} for the details of this calculation.
\end{proof}

\section{Local H\"older continuity} The methods of  this paper easily imply that $\Cw$ is locally H\"older continuous on $(0,1)$ whenever $w\in A_\infty ^+$.

\begin{corollary} There exists a numerical constant $c>1$ such that for all $w\in A_\infty ^+$ we have $\Cw \in C^\frac{1}{ c[w]_{A_\infty ^+} }(0,1)$.
\end{corollary}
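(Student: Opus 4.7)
The plan is to deduce local H\"older continuity of $\Cw$ from a sub-multiplicativity relation, combined with the Solyanik estimate of Corollary~\ref{c.main1}. First I would extract from the proof of Lemma~\ref{l.haloiter} the inclusion
\[
\mathcal H^+ _\lambda(E) \subseteq \mathcal H^+ _{\lambda/\mu}\bigl(\mathcal H^+ _\mu(E)\bigr), \qquad 0 < \lambda < \mu < 1,
\]
and then apply the definition of $\Cw$ twice: once to $\mathcal H^+ _\mu(E)$ and once to $E$. After taking the supremum over admissible $E$, this yields the sub-multiplicativity
\[
\Cw(\lambda) \leq \Cw(\lambda/\mu)\Cw(\mu).
\]
I would note in passing that $w\in A_\infty ^+$ implies $w\in A_p ^+$ for some $p>1$, so $\Cw(\alpha)<+\infty$ throughout $(0,1)$ and this inequality is non-degenerate.

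Next, given $\alpha<\beta$ in $(0,1)$, I would combine sub-multiplicativity with the trivial monotonicity $\Cw(\alpha)\geq \Cw(\beta)$ to write
\[
0 \leq \Cw(\alpha) - \Cw(\beta) \leq \bigl(\Cw(\alpha/\beta)-1\bigr)\Cw(\beta).
\]
The problem then reduces to bounding $\Cw(\alpha/\beta)-1$ when $\alpha/\beta = 1-(\beta-\alpha)/\beta$ is close to $1$, which is exactly the content of Corollary~\ref{c.main1}: it gives
\[
\Cw(\alpha/\beta)-1 \lesssim \bigl((\beta-\alpha)/\beta\bigr)^{1/(c[w]_{A_\infty ^+})}
\]
whenever $(\beta-\alpha)/\beta < e^{-c[w]_{A_\infty ^+}}$. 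Restricting $\alpha,\beta$ to a compact $K\subset (0,1)$, I would bound $\beta$ below by $\beta_0 := \min K>0$ and $\Cw(\beta)$ above by $\Cw(\beta_0)$ (using monotonicity). This chain of estimates delivers the local H\"older bound with exponent $1/(c[w]_{A_\infty ^+})$ in the small-separation regime $|\beta-\alpha| < \beta_0\, e^{-c[w]_{A_\infty ^+}}$.

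For pairs $\alpha,\beta\in K$ that are farther apart than this threshold no further work is needed: the trivial bound $|\Cw(\alpha)-\Cw(\beta)| \leq \Cw(\beta_0) \lesssim_K |\beta-\alpha|^{1/(c[w]_{A_\infty ^+})}$ absorbs the remaining case, since $|\beta-\alpha|^{1/(c[w]_{A_\infty ^+})}$ stays bounded below on that regime. The only substantive step is the sub-multiplicativity, which is essentially already present in the inclusion proved during Lemma~\ref{l.haloiter}; the rest is a straightforward interpolation between the Solyanik estimate near $1$ and the trivial uniform bound on the compact subset $K$.
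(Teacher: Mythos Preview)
Your proposal is correct and follows essentially the same route as the paper: extract the inclusion $\mathcal H^+ _\lambda(E)\subseteq \mathcal H^+ _{\lambda/\mu}(\mathcal H^+ _\mu(E))$ from the proof of Lemma~\ref{l.haloiter}, pass to the sub-multiplicativity $\Cw(\lambda)\leq \Cw(\lambda/\mu)\Cw(\mu)$, and then feed in Corollary~\ref{c.main1} to control $\Cw(\alpha/\beta)-1$. The paper is terser about the compact-set bookkeeping and the large-separation case, but the argument is the same.
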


\begin{proof} Let $E\subset\R$ and $0<\lambda<\alpha<1$. By the proof of Lemma~\ref{l.haloiter} we have that
	\[
	\mathcal H^+ _\lambda(E)\subseteq \mathcal H^+ _\frac{\lambda}{\alpha}(\mathcal H^+ _\alpha(E)).
	\]
We conclude that
\[
0<\Cw(\lambda)-\Cw(\lambda/\alpha)\leq\Cw(\lambda/\alpha)(\Cw(\alpha)-1).
\]	
Using Corollary~\ref{c.main1} we get that for all $0<x<y<1$ we have
\[
|\Cw(x)-\Cw(y)|\lesssim \Cw(y)\big(\frac{y-x}{x}\big)^\frac{1}{c[w]_{A_\infty ^+}}
\]
for some numerical constant $c>0$. The local H\"older continuity of $\Cw$ follows immediately from the estimate above.
\end{proof}

\section*{Acknowledgment} We are indebted to Francisco Javier Mart{\'{\i}}n-Reyes for enlightening discussions related to the subject of the paper.

\begin{bibsection}
\begin{biblist}
	
\bib{FMO}{article}{
   author={Forzani, L.},
   author={Mart{\'{\i}}n-Reyes, F. J.},
   author={Ombrosi, S.},
   title={Weighted inequalities for the two-dimensional one-sided
   Hardy-Littlewood maximal function},
   journal={Trans. Amer. Math. Soc.},
   volume={363},
   date={2011},
   number={4},
   pages={1699--1719},
   issn={0002-9947},
   review={\MR{2746661 (2011k:42034)}},
   doi={10.1090/S0002-9947-2010-05343-7},
}

\bib{Fujii}{article}{
   author={Fujii, Nobuhiko},
   title={Weighted bounded mean oscillation and singular integrals},
   journal={Math. Japon.},
   volume={22},
   date={1977/78},
   number={5},
   pages={529--534},
   issn={0025-5513},
   review={\MR{0481968 (58 \#2058)}},
}

\bib{HLP}{article}{
   author={Hagelstein, Paul},
   author={Luque, Teresa},
   author={Parissis, Ioannis},
   title={Tauberian conditions, Muckenhoupt weights, and differentiation
   properties of weighted bases},
   journal={Trans. Amer. Math. Soc.},
   volume={367},
   date={2015},
   number={11},
   pages={7999--8032},
   issn={0002-9947},
   review={\MR{3391907}},
}

\bib{HP2}{article}{
			Author = {Hagelstein, Paul},
			Author = {Parissis, Ioannis},
			Eprint = {1405.6631},
			Title = {Weighted Solyanik estimates for the Hardy-Littlewood maximal operator and embedding of $A_\infty$ into $A_p$},
			Url = {http://arxiv.org/abs/1405.6631},
			journal={to appear in J. Geom. Anal.},
			Year = {2015}}

\bib{HP1}{article}{
   author={Hagelstein, Paul},
   author={Parissis, Ioannis},
   title={Solyanik estimates in harmonic analysis},
   conference={
      title={Special functions, partial differential equations, and harmonic
      analysis},
   },
   book={
      series={Springer Proc. Math. Stat.},
      volume={108},
      publisher={Springer, Cham},
   },
   date={2014},
   pages={87--103},
   review={\MR{3297656}},
}

	\bib{HytP}{article}{
	   author={Hyt{\"o}nen, Tuomas},
	   author={P{\'e}rez, Carlos},
	   title={Sharp weighted bounds involving $A_\infty$},
	   journal={Anal. PDE},
	   volume={6},
	   date={2013},
	   number={4},
	   pages={777--818},
	   issn={2157-5045},
	   review={\MR{3092729}},
	}

\bib{KS}{article}{
author = {Kinnunen, J.},
author = {Saari, O.},
title = {On weights satisfying parabolic Muckenhoupt conditions},
journal ={Nonlinear Anal.},
volume={131},
date={2016},
pages={289--299}
}

\bib{LerOmb}{article}{
   author={Lerner, Andrei K.},
   author={Ombrosi, Sheldy},
   title={A boundedness criterion for general maximal operators},
   journal={Publ. Mat.},
   volume={54},
   date={2010},
   number={1},
   pages={53--71},
   issn={0214-1493},
   review={\MR{2603588 (2011b:42064)}}
}

\bib{Reyes}{article}{
   author={Mart{\'{\i}}n-Reyes, F. J.},
   title={New proofs of weighted inequalities for the one-sided
   Hardy-Littlewood maximal functions},
   journal={Proc. Amer. Math. Soc.},
   volume={117},
   date={1993},
   number={3},
   pages={691--698},
   issn={0002-9939},
   review={\MR{1111435 (93d:42016)}},
}

\bib{MPT}{article}{
   author={Mart{\'{\i}}n-Reyes, F. J.},
   author={Pick, L.},
   author={de la Torre, A.},
   title={$A^+_\infty$ condition},
   journal={Canad. J. Math.},
   volume={45},
   date={1993},
   number={6},
   pages={1231--1244},
   issn={0008-414X},
   review={\MR{1247544 (94m:42042)}},
}

\bib{MOT}{article}{
   author={Mart{\'{\i}}n-Reyes, F. J.},
   author={Ortega Salvador, P.},
   author={de la Torre, A.},
   title={Weighted inequalities for one-sided maximal functions},
   journal={Trans. Amer. Math. Soc.},
   volume={319},
   date={1990},
   number={2},
   pages={517--534},
   issn={0002-9947},
   review={\MR{986694 (90j:42047)}},
}

\bib{MT}{article}{
   author={Mart{\'{\i}}n-Reyes, Francisco J.},
   author={de la Torre, Alberto},
   title={Sharp weighted bounds for one-sided maximal operators},
   journal={Collect. Math.},
   volume={66},
   date={2015},
   number={2},
   pages={161--174},
   issn={0010-0757},
   review={\MR{3338703}},
}

\bib{Ombrosi}{article}{
   author={Ombrosi, Sheldy},
   title={Weak weighted inequalities for a dyadic one-sided maximal function
   in $\Bbb R\sp n$},
   journal={Proc. Amer. Math. Soc.},
   volume={133},
   date={2005},
   number={6},
   pages={1769--1775},
   issn={0002-9939},
   review={\MR{2120277 (2005k:42055)}},
   doi={10.1090/S0002-9939-05-07830-5},
}

\bib{ORT}{article}{
   author={Ortega Salvador, Pedro},
   title={Weighted inequalities for one-sided maximal functions in Orlicz
   spaces},
   journal={Studia Math.},
   volume={131},
   date={1998},
   number={2},
   pages={101--114},
   issn={0039-3223},
   review={\MR{1636403 (99g:42022)}},
}

\bib{Saari}{article}{
	Author = {Saari, O.},
	Eprint = {1408.5760},
	Title = {Parabolic BMO and global integrability of supersolutions to doubly nonlinear parabolic equations},
	Url = {http://arxiv.org/abs/1408.5760},
	Year = {2014},
	journal = {to appear in Rev. Mat. Iberoam.}
	Bdsk-Url-1 = {http://arxiv.org/abs/1408.5760}}

\bib{Saw}{article}{
   author={Sawyer, E.},
   title={Weighted inequalities for the one-sided Hardy-Littlewood maximal
   functions},
   journal={Trans. Amer. Math. Soc.},
   volume={297},
   date={1986},
   number={1},
   pages={53--61},
   issn={0002-9947},
   review={\MR{849466 (87k:42018)}},
}

\bib{Sol}{article}{
   author={Solyanik, A. A.},
   title={On halo functions for differentiation bases},
   language={Russian, with Russian summary},
   journal={Mat. Zametki},
   volume={54},
   date={1993},
   number={6},
   pages={82--89, 160},
   issn={0025-567X},
   translation={
      journal={Math. Notes},
      volume={54},
      date={1993},
      number={5-6},
      pages={1241--1245 (1994)},
      issn={0001-4346},
   },
   review={\MR{1268374 (95g:42033)}},
}

	\bib{W1}{article}{
	   author={Wilson, J. Michael},
	   title={Weighted inequalities for the dyadic square function without
	   dyadic $A_\infty$},
	   journal={Duke Math. J.},
	   volume={55},
	   date={1987},
	   number={1},
	   pages={19--50},
	   issn={0012-7094},
	   review={\MR{883661 (88d:42034)}},
	}

	\bib{W2}{book}{
	   author={Wilson, J. Michael},
	   title={Weighted Littlewood-Paley theory and exponential-square
	   integrability},
	   series={Lecture Notes in Mathematics},
	   volume={1924},
	   publisher={Springer, Berlin},
	   date={2008},
	   pages={xiv+224},
	   isbn={978-3-540-74582-2},
	   review={\MR{2359017 (2008m:42034)}},
	}

\end{biblist}
\end{bibsection}

\end{document}